\newtheorem{theorem}{Theorem}
\newtheorem{example}[theorem]{Example}
\begin{document}
\begin{center}{\Large Varieties in $(\mathbf{P}^1(\overline{\mathbf{F}}))^n$\\
    by Elimination and Extension}\end{center}
\begin{center}{\large Douglas A. Leonard\\    
    Department of Mathematics and Statistics\\
    Auburn University}\end{center}

\begin{abstract}
  This paper contains a theory of elimination and extension to compute varieties symbolically,
  based on using {\em coordinates} from $(\mathbf{P}^1(\overline{\mathbf{F}}))^n$
  and disjoint {\em parts} of varieties (defined by both equality and inequality constraints),
  leading to a recursive algorithm to compute said varieties
  by extension at the level of {\em parts} of a variety.
  {\sc Macaulay2} code for this is included along with an example. 
  This is a first step in the author's project of giving a purely algebraic
  theory of desingularization of function fields,
  in that that project relies heavily on using this type of coordinates for function field elements
  and on partitioning a set of valuations into disjoint sets similarly.
\end{abstract}  
\newpage

\section{Introduction}

Given an ordered set of coordinate functions $(x_n,\ldots,x_1)$,
and an ideal $I:=I(x_n,\ldots,x_1)\subseteq \overline{\mathbf{F}}[x_n,\ldots,x_1]$
of all the polynomial relations among them,
it is of interest to consider the variety
\[V(I):=\{ (\overline{x}_n,\ldots,\overline{x}_1)\in T^n\ :\
  b(\overline{x}_n,\ldots,\overline{x}_1)=0\mbox{ for all }b\in I\}\]
For any affine coordinate functions used here $T=\overline{\mathbf{F}}$, an algebraically closed field,
but for rational coordinate functions (elements of a function field),
$T=\mathbf{P}^1(\overline{\mathbf{F}})$, the projective line over that algebraically closed field,
in that for $x_j=g_j/h_j$, it is natural to expect
$\overline{g_j/h_j}\in T$ to be the inverse of $\overline{h_j/g_j}\in T$,
even in the case that one is $0/1$ and the other $1/0$.
We will primarily be dealing with coordinate values from
$(\mathbf{P}^1(\overline{\mathbf{F}}))^n$ in this paper,
though we will embed this problem into an affine problem with coordinates in
$\overline{\mathbf{F}}^{2n}$ to do the extension.

The philosophy behind determining {\em all} the elements of a variety
by elimination and extension is to work one coordinate at a time,
finding all possibilities for coordinate $\overline{x}_1\in T$ first,
and then recursively finding all possibilities
for coordinates $(\overline{x}_{j+1},\ldots,\overline{x}_1)\in T^{j+1}$
given $(\overline{x}_j,\ldots,\overline{x}_1)\in T^j$.
This is analagous to row-reduction and back-substitution in linear algebra.
It should be expected to produce exactly the elements of the variety,
and it should produce the same set of elements
for any choice (out of $n!$ possibilities) of the ordering of the variables in the lex monomial ordering used.
Extension doesn't really work at the level of varieties,
but rather at the level of disjoint parts $S$ of a partition of the variety,
each part defined by a (finite) set of polynomial equality constraints $EQ(S)$
and a (finite, possibly empty) set of inequality constraints $NEQ(S)$.

Such partitions are crucial in doing desingularization of function fields as well, \cite{Leon}.
For instance, the Whitney umbrella, Example 3.6.1 in \cite{Ko}:
\[V:=\{(\overline{x}_3,\overline{x}_2,\overline{x}_1)\in\overline{\mathbf{F}}^3\ :\ \overline{x}_3\overline{x}_2^2-\overline{x}_1^2=0\}\]
is {\em singular} along the line
\[L:=\{(\overline{x}_3,\overline{x}_2,\overline{x}_1)\in\overline{\mathbf{F}}^3\ :\ \overline{x}_2=0=\overline{x}_1\},\]
but has a more complicated {\em singularity} at the point $P$ with $\overline{x}_3=\overline{x}_2=\overline{x}_1=0$.
The discussion ensuing in \cite{Ko} is then in terms of whether to {\em blow up} the variety, $L$ or the variety $P$,
rather than dealing with the disjoint parts $P$, $L\backslash P$, and even the part $L^c$ consisting of the non-singular points.

So we'll start with notation to describe what elimination and extension should look like in general,
then consider how to deal with this relative to partitioning the variety.
The actual theorem and its proof are relatively short, just explaining how $(\overline{x}_j,\ldots,\overline{x}_1)\in T^j$
satisfying the constraints of a part $S$ extend to $(\overline{x}_{j+1},\ldots,\overline{x}_1)\in T^{j+1}$
satisfying the constraints of a part $S^*$.
Even the {\sc Macaulay2} code given to implement this is not very long by code standards.

\section{Notation for elimination and extension}
Let $\overline{\mathbf{F}}$ be an algebraically closed field
(here for computational reasons with $\mathbf{F}$ restricted to being
the rationals, $\mathbf{Q}$, in characteristic $0$
or the finite field of $p$ elements, $\mathbf{F}_p$, in characteristic $p>0$).
Let $R:=\overline{\mathbf{F}}[x_n,\ldots,x_1]$
with {\em lex} $x_n\succ\cdots\succ x_1$ monomial ordering
(an example of an {\em elimination order}).

Let $B$ be a minimal, reduced (hence finite) (lex) Gr\"obner basis
for the ideal $I$ of $R$ that it generates.

Define
\[R_j:=\overline{\mathbf{F}}[x_j,\ldots,x_1];\]
\[I_j:=I\cap R_j;\]
\[B_j:=B\cap R_j;\]
\[V_j:=\{(\overline{x}_j,\ldots,\overline{x}_1)\in T^j\ :\
  b(\overline{x}_j,\ldots,\overline{x}_1)=0\mbox{ for all }b\in B_j\}\]
What $T$ is is a central point of this paper.
Then the general form of elimination and extension would be roughly as follows.

\begin{theorem}[Elimination]
  ${\ }$
  
  \begin{enumerate}
    
\item $I_j$ is an $($elimination$)$ ideal of $R_j$, $1\leq j\leq n$.
\item $B_j$ is a $($lex$)$ Gr\"obner basis for $I_j$, $1\leq j\leq n$.
\item $\{ (\overline{x}_j,\ldots,\overline{x}_1)\in T^j\ :\  (\overline{x}_n,\ldots,\overline{x}_1)\in V_n \}\subseteq V_j$, $1\leq j\leq n$.
\end{enumerate}
\end{theorem}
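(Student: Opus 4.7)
The plan is to treat the three claims in order of increasing depth; (1) and (3) are essentially bookkeeping, and the substantive content of the theorem is (2), a standard consequence of working with a lex elimination order.

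For (1), I would simply note that $I_j = I \cap R_j$ is the intersection of the ideal $I$ with the subring $R_j \subseteq R$, hence is automatically closed under $R_j$-linear combinations and is therefore an ideal of $R_j$. For (3), suppose $(\overline{x}_n,\ldots,\overline{x}_1) \in V_n$, so every $b \in B$ vanishes at this point. For any $b \in B_j \subseteq B$, the polynomial $b$ lies in $R_j$, so the value $b(\overline{x}_n,\ldots,\overline{x}_1)$ depends only on $(\overline{x}_j,\ldots,\overline{x}_1)$; the vanishing condition therefore transfers to the projection, giving an element of $V_j$.

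The hard part is (2), and I would attack it by the classical lex argument. Let $f \in I_j$ be arbitrary. Since $B$ is a Gr\"obner basis of $I$, there exists $b \in B$ with $\mathrm{lm}(b) \mid \mathrm{lm}(f)$. Because $f \in R_j$, the leading monomial $\mathrm{lm}(f)$ involves only $x_j,\ldots,x_1$, which forces $\mathrm{lm}(b)$ to lie in $R_j$ as well. The crucial use of the lex order $x_n \succ \cdots \succ x_1$ is now: any monomial involving some $x_i$ with $i>j$ strictly dominates every monomial of $R_j$ under lex, so every term of $b$ (being lex-bounded by $\mathrm{lm}(b) \in R_j$) also lies in $R_j$. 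Hence $b \in R_j$ and therefore $b \in B_j$. This shows that every leading monomial of an element of $I_j$ is divisible by the leading monomial of some element of $B_j$, which is the Gr\"obner basis property for $I_j$ in $R_j$.

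The main subtlety I expect is precisely the step above: the lex order is what allows the inference from ``$\mathrm{lm}(b) \in R_j$'' to ``all of $b \in R_j$'', and for other elimination orders one would typically need a more delicate argument or settle for a weaker conclusion. Once that single implication is in hand, the three parts assemble immediately into the theorem.
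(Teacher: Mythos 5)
Your argument is correct and is essentially the standard one the paper defers to (the short affine proof in Cox--Little--O'Shea): parts (1) and (3) by direct bookkeeping, and part (2) by the lex divisibility argument showing that a Gr\"obner basis element whose leading monomial lies in $R_j$ must lie entirely in $R_j$, which is exactly the point where the paper says the lex ordering is needed. No gaps; you also correctly prove only the containment in (3), consistent with the paper's remark that it need not be an equality in general.
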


The proof should be a straight-forward exercise.
[A proof given for the affine case in \cite{CLO} is rather short,
but the advantage of the reader trying this is to see
where the lex ordering is used
and in trying to understand that the third item is not always an equality,
though it will be for the coordinates used here.]  

\begin{theorem}[Extension]
  ${\ }$
  
  \begin{enumerate}

  \item If $(\overline{x}_j,\ldots,\overline{x}_1)\in V_j\subseteq T^j$,
  then there is at least one $\overline{x}_{j+1}\in T$
  such that $(\overline{x}_{j+1},\ldots,\overline{x}_1)\in V_{j+1}\subseteq T^{j+1}$.
\item All such $\overline{x}_{j+1}$ can be computed symbolically.
\item $V_j=\{ (\overline{x}_j,\ldots,\overline{x}_1)\in T^j\ :\  (\overline{x}_n,\ldots,\overline{x}_1)\in V_n\}$.
\end{enumerate}
\end{theorem}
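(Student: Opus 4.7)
The plan is to prove (1) first; then (2) is a consequence of the symbolic nature of univariate root-finding over $\overline{\mathbf{F}}$, and (3) combines (1) with item~3 of the Elimination Theorem.

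For (1), I would use the lex order to pin down the genuinely new constraints at level $j{+}1$. Because $B$ is a lex Gr\"obner basis, any $b\in B$ whose leading monomial involves a variable strictly above $x_{j+1}$ lies outside $B_{j+1}$, while every $b\in B_j$ is already satisfied by hypothesis. So the only conditions on the unknown $\overline{x}_{j+1}$ come from $B_{j+1}\setminus B_j$, i.e.\ those $b$ whose leading monomial is divisible by a positive power of $x_{j+1}$. Writing each such $b$ as a polynomial in $x_{j+1}$ with coefficients in $R_j$ and substituting the given $(\overline{x}_j,\ldots,\overline{x}_1)$ reduces the problem to finding a common root in $T$ of a finite family $\{f_b(x_{j+1})\}$.

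The main obstacle is the classical one behind the affine Extension Theorem of \cite{CLO}: the $x_{j+1}$-leading coefficient of some $b$ may vanish under substitution, producing a degenerate or inconsistent affine system. The remedy central to this paper is to work in $T=\mathbf{P}^1(\overline{\mathbf{F}})$, i.e.\ to homogenize $x_{j+1}$ as $g_{j+1}/h_{j+1}$, clear denominators, and treat each $f_b$ as a homogeneous form in $(g_{j+1},h_{j+1})$ over $\overline{\mathbf{F}}$. Such a form either vanishes identically (imposing no new constraint) or splits into linear factors, each contributing a projective root; a vanished leading coefficient simply contributes a factor of $h_{j+1}$, i.e.\ a root at $\infty$. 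I would then argue, via the $\overline{\mathbf{F}}^{2n}$ embedding alluded to in the introduction and the identity $I_{j+1}\cap R_j=I_j$, that the projective zero sets of these forms share a common root. The delicate point is checking that the compatibility enforced by the Gr\"obner basis reductions in $R_j$ actually survives the passage to the homogenized setting; this is where \emph{minimality} and \emph{reducedness} of $B$ are essential, since they prevent spurious independent constraints from appearing among the $f_b$.

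For (2), each coefficient of each $f_b$ is a rational function of the symbolic data $(\overline{x}_j,\ldots,\overline{x}_1)$, so its linear factorization over a suitable algebraic extension is effective, and intersecting the resulting finite sets of projective roots yields the finitely many admissible $\overline{x}_{j+1}\in T$. For (3), the inclusion $\supseteq$ is precisely item~3 of Theorem~1 specialized to level $j$; the reverse inclusion follows by iterating (1) from level $j$ up through level $n$ to extend any tuple of $V_j$ to a tuple of $V_n$, whose projection onto the first $j$ coordinates is the tuple we started with.
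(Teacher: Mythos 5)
There is a genuine gap at the heart of your argument for item (1). After reducing to the family $\{f_b(x_{j+1})\}$ obtained from $B_{j+1}\setminus B_j$ and homogenizing in $(g_{j+1},h_{j+1})$, the entire content of the theorem is the claim that these binary forms have a \emph{common} projective root at every point of $V_j$; a vanishing leading coefficient only hands you a root $(1:0)$ of \emph{that one} form, not of the others. At exactly this point you write ``I would then argue \ldots that the projective zero sets of these forms share a common root'' and defer the ``delicate point'' to minimality and reducedness of $B$ --- but that is the statement to be proved, not an argument, and minimality/reducedness is not what makes it work. The paper's proof supplies the missing mechanism: it works not with the variety $V_j$ but with a \emph{part} $S$ (constraints $EQ(S)$, $NEQ(S)$), splits $S$ in two whenever the leading coefficient $lc_1$ of the lex-smallest new constraint $b_1$ could be either zero or nonzero, and, on a part where $lc_1\neq 0$, takes $\overline{y}_{j+1}$ to be a symbolic root of $b_1$; it then shows every other new constraint $b_s$ vanishes there by a reduction argument (take $s$ minimal with $b_s(\overline{y}_{j+1})\neq 0$; $lc_1b_s-lc_sb_1$ has $y_{j+1}$-degree less than $d_s$, hence reduces to zero using only earlier elements of $EQ(S)$, all of which vanish at the point, forcing $lc_1b_s=0$ and so $b_s=0$). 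Your proposal contains no substitute for this case-splitting or for the compatibility argument, and the paper explicitly warns that extension ``doesn't really work at the level of varieties, but rather at the level of disjoint parts.''

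A second, related problem: you substitute the known coordinates $(\overline{x}_j,\ldots,\overline{x}_1)$ into coefficients lying in $R_j$ and homogenize only in the new variable, but in this paper those earlier coordinates live in $\mathbf{P}^1(\overline{\mathbf{F}})$ and may be $(1:0)$, so such a substitution is not even defined. The paper avoids this by multihomogenizing every generator in all pairs $(g_k,h_k)$, passing to the $2n$ affine variables $y_{2n},\ldots,y_1$ with the non-homogeneous constraints $y_{2k-1}(y_{2k-1}-1)$ and $(y_{2k}-1)(y_{2k-1}-1)$ forcing canonical representatives, and extending one $y$-variable at a time inside that affine model; you mention the $\overline{\mathbf{F}}^{2n}$ embedding in passing but never use it, so items (1)--(3) as you argue them rest on an unproved (and, for infinite earlier coordinates, ill-posed) reduction. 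Items (2) and (3) are fine in outline once (1) is in place --- the paper likewise treats the extending coordinate as a symbolic root and gets (3) by iterating extension --- but as written the proposal does not close the main step.
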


The proof of extension is another matter altogether,
in that this is not always the case for affine varieties
(meaning $V_n\subseteq \overline{\mathbf{F}}^n$).

The simple example $B:=(x_2x_1-1)=B_2$, $B_1=\emptyset$
has $V_1=\overline{\mathbf{F}}$,
and $(0)\in V_1$ does not extend to $(\overline{x}_2,0)\in V_2$
since  $\overline{x}_2\cdot 0-1=-1\neq 0$.
[Of course, it is the claim here that
$((0:1))\in(\mathbf{P}^1(\overline{\mathbf{F}}))^1$ extends to
$((1:0),(0:1))\in(\mathbf{P}^1(\overline{\mathbf{F}}))^2$.]

Another such simple example $B:=(x_2x_1)=B_2$, $B_1=\emptyset$
has $V_1=\overline{\mathbf{F}}$,
and $(0)\in V_1$ should extend to $(\overline{x}_2,0)\in V_2$
for any  $\overline{x}_2\in\overline{\mathbf{F}}$
(with $\overline{x}_1\neq 0$ extending to $(0,\overline{x}_1)\in V_2$).
But a theorem such as \cite{CLO} [Theorem 3.1.3]
that tries to deal with this example
by trying only to extend if $\overline{x}_1\neq 0$, would miss the former case.

So here varieties will be subsets of $(\mathbf{P}^1(\overline{\mathbf{F}}))^n$.
Then such varieties will be partitioned into (disjoint) {\em parts}, with part $S$,
defined by a finite set $EQ(S)$ of equality constraints on the coordinates $((g_n:h_n),\ldots,(g_1:h_1))$
and a finite (possibly empty) set $NEQ(S)$ of inequality constraints as well
(again as opposed to having varieties $V$ only defined by equality constraints given by $I(V)$).
$EQ(S)$ will include the non-homogeneous equality constraints 
$h_i(h_i-1),\ (g_i-1)(h_i-1)$ for each $1\leq i\leq n$
that force a canonical representative $(1:0)$ or $(\overline{g}_i:1)$
for each point of the projective line.

The only other ingredients will be a mapping 
\[ \phi\ :\ \overline{\mathbf{F}}[g_n,h_n,\ldots,g_1,h_1]\to
  \overline{\mathbf{F}}[y_{2n},y_{2n-1},\ldots,y_2,y_1]\]
to blur the distinction between the $g_j$'s and the $h_j$'s in doing extension;
and the further mappings
\[ \phi_j\ :\ \overline{\mathbf{F}}[y_{2n},\ldots,y_1]\to
              \overline{\mathbf{F}}[\overline{y}_j,\ldots \overline{y}_1][y_{2n},\ldots,y_{j+1}]\]
used to identify leading coeficients $lc(f)\in  \overline{\mathbf{F}}[\overline{y}_j,\ldots, \overline{y}_1]$
that lead to different extensions depending on whether $lc(f)$ can be $0$ or not.
Some parts $S$ will then be partitioned into two (disjoint) parts
by appending the constraint $lc(f)$ to $EQ(S)$ or $NEQ(S)$,
based on whether such leading coefficient takes on the value $0$ or not,
if $lc(f)$ is not already known to be non-zero.
(This leads to computing a (finite) Gr\"obner basis for
either $\langle EQ(S)\rangle +\langle lc(f)\rangle$
or $saturation(\langle EQ(S)\rangle,\langle lc(f)\rangle$ respectively to get the new equality constraints,
and/or appending $lc(f)$ to $NEQ(S)$ in the latter to get the new inequality constraints.)

So, given a variety $V=\mathbf{V}(I)$ for $I$ an ideal of $\overline{\mathbf{F}}[x_n,\ldots,x_1]$,
first replace each $x_j$ by $g_j/h_j$ to symbolically view $x_j$ as a rational function.
Then turn the generator polynomials $b$ of $I$ into polynomials:
\[ b^*(g_n,h_n,\ldots,g_1,h_1):=\left(\prod_{j=1}^nh_j^{deg(b,x_j)}\right)b(g_n/h_n,\ldots,g_1/h_1)\]
that are homogeneous in each pair $(g_k,h_k)$, $1\leq k\leq n$.
Use the map
\[ \phi\ :\ \overline{\mathbf{F}}[g_n,h_n,\ldots,g_1,h_1]\to \overline{\mathbf{F}}[y_{2n},\ldots,y_1]\]
defined by $\phi(g_j):=y_{2j}$ and $\phi(h_j):=y_{2j-1}$ for $1\leq j\leq n$.
Append the non-homogeneous equality constraints $y_{2j-1}(y_{2j-1}-1)=0$, and
$(y_{2j}-1)(y_{2j-1}-1)=0$ for $1\leq j\leq n$ to force a canonical choice for
representatives of the elements of the projective line as either $(1:0)$ or $(\overline{y_{k}}:1)$ for $1\leq k\leq 2n$.

Consider the further maps
\[ \phi_j\ :\ \overline{\mathbf{F}}[y_{2n},\ldots,y_1]\to
  \overline{\mathbf{F}}[\overline{y}_{j},\ldots,\overline{y}_1][y_{2n},\ldots,y_{j+1}]\]
defined by $\phi_j(y_k):=\overline{y}_k$ for $k\leq j$ and $\phi_j(y_k):=y_k$ for $k> j$.

Computations will be done symbolically in these subrings
\[R_j:=\overline{\mathbf{F}}[\overline{y}_{j},\ldots,\overline{y}_1][y_{2n},\ldots,y_{j+1}]\]
though ultimately any $(\overline{y}_{2n},\ldots,\overline{y}_1)\in\overline{\mathbf{F}}^{2n}$
will have to be reinterpreted as an element of $(\mathbf{P}^1(\overline{\mathbf{F}}))^n$
by viewing each $(y_{2k},y_{2k-1})\in\overline{\mathbf{F}}^2$
as $(y_{2k}:y_{2k-1})\in\mathbf{P}^1(\overline{\mathbf{F}})$ for $1\leq k\leq n$.

[Actually, computationally we can get away with using only the ring
\[R:=\overline{\mathbf{F}}[z_{2n},\ldots,z_1][y_{2n},\ldots,y_1]\]
so as to cut down on the number of rings and ring maps needed.]

\newpage

\section{Theorem}
\begin{theorem}[The Extension Theorem for coordinates in $(\mathbf{P}^1(\overline{\mathbf{F}}))^n$]
  
Given the preceding setup, suppose that for some part $S$,
\[ S|_{R_j}:=\{(\overline{y}_j,\ldots,\overline{y}_1)\in\overline{\mathbf{F}}^j\ :\]
\[  b(\overline{y}_j,\ldots,\overline{y}_1)=0,\mbox{ for all } b\in(EQ(S)\cap R_j)\]
\[\mbox{ and }
  b(\overline{y}_j,\ldots,\overline{y}_1)\neq 0,\mbox{ for all } b\in(NEQ(S)\cap R_j)\}\]
is known, and is to be extended to one or more parts of the form

\[ S^*|_{R_{j+1}}:=\{(\overline{y}_{j+1},\ldots,\overline{y}_1)\in\overline{\mathbf{F}}^{j+1}\ :\]
\[
  b(\overline{y}_{j+1},\ldots,\overline{y}_1)=0,\mbox{ for all } b\in(EQ(S^*)\cap R_{j+1})\]
\[\mbox{ and }
  b(\overline{y}_{j+1},\ldots,\overline{y}_1)\neq 0,\mbox{ for all } b\in(NEQ(S^*)\cap R_{j+1})\}\]
by finding polynomial restrictions on the choice of $\overline{y}_{j+1}$ for each such part $S^*$.

This can be done as follows:
\begin{enumerate}
  \item Consider those $b_i(y_{j+1})\in (\phi_j(EQ(S))\cap\phi_j(R_{j+1}))\backslash \phi_j(R_j)$
      in increasing lex monomial order, with $d_i:=degree(b_i,y_{j+1})$.
  \item Let $lc_i:=LC(b_i(\overline{y}_j,\ldots,\overline{y}_1))\in \phi_j(R_j)$.
  \item If $lc_1$ could take on a non-zero or a zero value, then $S$ needs to be partitioned
      into two $($disjoint$)$ parts relative to $lc_1$ being non-zero or not before proceeding.
      But assuming that $lc_1$ can only take on non-zero values,
      either because it is explicitly a non-zero field element
      or because it is a factor of an element in $NEQ(S)$,
      choose $\overline{y}_{j+1}$ to be a $($symbolic$)$ root of $b_1(y_{j+1})$
      $($even if the explicit roots could be computed$)$.
\end{enumerate}
\end{theorem}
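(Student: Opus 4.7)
The plan would be to apply the Elimination Theorem (Theorem 1) to the ideal $\langle\phi(EQ(S))\rangle$ in the lex ordering $y_{2n}\succ\cdots\succ y_1$, so that after passing through $\phi_j$ the polynomials in $\phi_j(EQ(S))\cap\phi_j(R_{j+1})$ correctly cut out the image of $S$ under projection to the $(\bar y_{j+1},\bar y_j,\ldots,\bar y_1)$-coordinates. This reduces the extension problem to a one-variable question: given $(\bar y_j,\ldots,\bar y_1)\in S|_{R_j}$, for which $\bar y_{j+1}\in\overline{\mathbf{F}}$ do all specializations $\tilde b_i(y_{j+1}):=b_i(\bar y_j,\ldots,\bar y_1,y_{j+1})$ vanish simultaneously, while leaving every element of $NEQ(S)$ non-vanishing?

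Next I would argue that, generically, the smallest polynomial $b_1$ suffices. Being smallest in lex order, it has the smallest degree $d_1$ in $y_{j+1}$ with leading coefficient $lc_1\in\phi_j(R_j)$; if $lc_1(\bar y_j,\ldots,\bar y_1)\neq 0$, then $\tilde b_1$ genuinely has degree $d_1$, and any common zero of the $\tilde b_i$ must lie among its finitely many roots. Declaring $\bar y_{j+1}$ to be a symbolic root of $b_1$ and appending $b_1$ to $EQ(S)$, then Gr\"obner-reducing, correctly propagates any residual conditions from $b_2,b_3,\ldots$ into the refined ideal of the new part $S^*$, so $S^*$ captures exactly the extensions compatible with the full system.

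The partition mechanism handles the degenerate possibility that $lc_1$ vanishes. Unless $lc_1$ is already known to be nonzero (either as a nonzero scalar or as a factor of some element of $NEQ(S)$), I would bifurcate $S$ into two disjoint parts: one obtained by appending $lc_1$ to $EQ(S)$ and Gr\"obner-reducing, the other by appending $lc_1$ to $NEQ(S)$ or equivalently saturating. On the first the recipe iterates with $b_2,b_3,\ldots$; on the second it applies directly to $b_1$. The hard part will be establishing termination and completeness of this recursive bifurcation: termination, because the list of $b_i$ is finite (the reduced Gr\"obner basis is finite) and each bifurcation either strictly shortens the active list on one branch or strictly enlarges $EQ(S)$ or $NEQ(S)$; completeness, because the split $lc_i=0$ versus $lc_i\neq 0$ is exhaustive at every stage, so each valid extension lands in exactly one produced part $S^*$, giving the claimed recursive description of $V_{j+1}$ at the level of \emph{parts} rather than varieties.
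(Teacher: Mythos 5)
There is a genuine gap: you never prove the one claim that carries all the content of the theorem, namely that when $lc_1\neq 0$ \emph{every} root $\overline{y}_{j+1}$ of the specialized $b_1$ automatically annihilates all the remaining $b_2,b_3,\ldots$, so that picking a symbolic root of $b_1$ alone already lands in the part $S^*$ and the extension always exists. What you do establish is only the trivial containment (any common zero of the specialized $b_i$ is among the finitely many roots of $\tilde b_1$, since $\tilde b_1$ has honest degree $d_1$), and then you wave at the converse with ``appending $b_1$ to $EQ(S)$ and Gr\"obner-reducing correctly propagates any residual conditions from $b_2,b_3,\ldots$'' --- hedged by ``generically.'' If residual conditions could actually survive, the recipe in item 3 of the statement (choose $\overline{y}_{j+1}$ to be any root of $b_1$) would be wrong, so this cannot be left as a propagation remark; it is exactly the assertion to be proved.

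The paper closes this gap with a short S-polynomial-style argument that your proposal is missing: take $s$ minimal with $b_s(\overline{y}_{j+1})\neq 0$; the combination $lc_1\,b_s - lc_s\,b_1$ (with the appropriate power of $y_{j+1}$ to cancel leading terms) has smaller leading term than $b_s$, hence --- because $EQ(S)$ is a (lex) Gr\"obner basis --- it reduces to zero using only elements preceding $b_s$; all of those vanish at $(\overline{y}_{j+1},\overline{y}_j,\ldots,\overline{y}_1)$ by minimality of $s$ and the choice of $\overline{y}_{j+1}$ as a root of $b_1$, so $lc_1\,b_s$ evaluates to $0$, and $lc_1\neq 0$ forces $b_s(\overline{y}_{j+1})=0$, a contradiction. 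Note this is also the point where the lex elimination order is genuinely used, which your outline invokes only implicitly through the Elimination Theorem. Your remarks on termination and exhaustiveness of the $lc_1=0$ versus $lc_1\neq 0$ bifurcation are fine but concern the surrounding algorithm, not the extension step the theorem actually asserts.
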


\begin{proof} Suppose there were some $b_s(y_{j+1})$ for which $b_s(\overline{y}_{j+1})\neq 0$.
Assume $s$ is chosen smallest relative to this.
Then $lc_1b_s(y_{j+1})-lc_sb_1(y_{j+1})$ has degree less than $d_s$, so is reducible to $0$
using only elements of $EQ(S)$ preceding $b_s$ in the lex monomial ordering.
But all of these are $0$ at $(\overline{y}_{j+1},\ldots,\overline{y}_1)$, as is $b_1$.
So $lc_1b_s$ is $0$ as well.
But $lc_1\neq 0$, forcing  $b_s(\overline{y}_{j+1})= 0$, a contradiction.
\end{proof}

\begin{example}
  Consider the ideal $I=\langle x_1(x_3^2x_2+x_3+1),x_3(x_3^2x_2+x_3+1)\rangle$,
and its $($affine$)$ variety $V$. 
Since $B_1=B_2=\emptyset$,
$V_1=\overline{\mathbf{F}}^1$ and $V_2=\overline{\mathbf{F}}^2$. 
If $\overline{x}_2\neq 0$, then the affine extension theorem in \cite{CLO} would extend this
correctly for $\overline{x}_3\ :\ \overline{x}_3^2\overline{x}_2+\overline{x}_3+1=0$.
But it does not apply to the case  $\overline{x}_2=0$.
In this case, $(0,0)$ should extend to either $(0,0,0)$ or $(-1,0,0)$, while
$(0,\overline{x}_1)$ with $\overline{x}_1\neq 0$ can be extended to $(-1,0,\overline{x}_1)$ only.
This example is worked out using the {\sc Macaulay2} code below, with the edited result given at the end. 
\end{example}

\section{Macaulay2 code}
What follows is the author's {\sc Macaulay2} code and its application to this example
(with $z_i$ for $\overline{y}_i$, and $EQ\# i$ and $NEQ\# i$ for $EQ(S_i)$ and $NEQ(S_i)$). 
Everything happens inside the one ring $R$ to save having to map elements and ideals of one ring into
another all the time. 
The part numbered $17$ is the affine part that the affine CLO theorem 3.1.3
mentioned above doesn't deal with;
$14,16$ and half of $15$ are the other affine parts that it would deal with;
and $8,10,11,12,18$ and the other half of $15$
have at least one non-affine coordinate.

\begin{verbatim}
--A Gr\"obner basis as an ideal instead of a matrix
GB:=(I)->ideal flatten entries gens gb I
---------------------------------------------------
--symbolic LC that could be zero
redCoeff:=(LC,NEQk)->(
    if NEQk !={} then(
        ilc=ideal(promote(LC,ring(NEQk#0)));
        for i to #NEQk-1 do(
            ilc=saturate(ilc,ideal(NEQk#i));
	    );
        lc=(gens(ilc))_(0,0);
    )
    else( 
        lc=LC;
    );
    lc
)
---------------------------------------------------
rad:=(f,R)->flatten entries gens radical promote(ideal(f),R)
---------------------------------------------------
multihomRing:=(n,field)->(
--reverse ordering of subscripts-------------------
    l=for i to 2*n-1 list 2*n-i;
--subscripted variables
    ll=for i to #l-1 list y_(l#i);
--subcripted coordinate values
    lll=for i to #l-1 list z_(l#i);
--ring of subcripted coordinate values
    F=field[lll,MonomialOrder=>Lex];
--ring of subcripted variables
    R=F[ll,MonomialOrder=>Lex]
    );
---------------------------------------------------
multihomVariety:=(n,R,multihom)->(
--non-homogeneous constraints to force canonical reps for elements of P^1
    nonhom=ideal(
        for i to 2*n-1 list 
            if i%2==0 then y_(i+1)*(y_(i+1)-1) 
	    else (y_(i+1)-1)*(y_i-1));
    EQ={GB radical (multihom+nonhom)};
    NEQ={{}};
    PREV={-1};
----------------------------------------------------
    phi:=(j)->map(R,R,matrix{ 
        for i to 2*n-1 list(
            if i>= 2*n-j 
            then z_(2*n-i) 
            else y_(2*n-i)
            )}
        );
----------------------------------------------------
    psi=map(R,R,matrix{gens(R)}|matrix{gens(R)});
----------------------------------------------------
    currentnode=0; 
    nextnode=1;
    sizeEQ=1;
    while currentnode < sizeEQ do(
        varno=1;
        found=0;
        while found==0 and varno< 2*n do(
            EQk=psi(EQ#currentnode);
            NEQk=NEQ#currentnode;
            p=(phi(varno))(EQk);
            for i to numgens(p)-1 do(
                if leadMonomial(p_i)!=1 then(
                    m=redCoeff(leadCoefficient(p_i),NEQk);
                    if m!=0 then(
                        degm=for i from 1 to 2*n list degree(z_i,lift(m,F));
                        if degm!=for i to 2*n-1 list 0 then(
                            J=(gens radical ideal promote(m,R))_(0,0);
		            found=varno;
                            break;
	                );
		    );
	        );
                if found>0 then break;
            );
            varno=varno+1;	
        );		
        if found >0 and found < 2*n then(
            I=GB(radical((phi(found))(EQk+ideal(J))));
	    NEQk=unique(for i to #NEQk-1 list (
                 gens radical saturate(ideal(NEQk#i),I))_(0,0));
	    NEQkk=for i to #NEQk-1 list (NEQk#i)%I;
            if member(0,NEQkk) == false then(
	        EQ=append(EQ,I);
                NEQ=append(NEQ,NEQk);
	        PREV=append(PREV,currentnode); 
                nextnode=nextnode+1;
	    );
            I=GB(radical(saturate((phi(found))(EQk),ideal(J))));
	    NEQk=NEQk|{J};
	    NEQk=unique(for i to #NEQk-1 list (
                 gens radical saturate(ideal(NEQk#i),I))_(0,0));
	    NEQkk=for i to #NEQk-1 list (NEQk#i)%I;
            if member(0,NEQkk) ==false then(
                EQ=append(EQ,I);
                NEQ=append(NEQ,NEQk);
	        PREV=append(PREV,currentnode);
                nextnode=nextnode+1;
	    );
        );
        currentnode=currentnode+1;
        sizeEQ=#EQ;
    );
    (EQ,NEQ,PREV)
);
----------------------------------------------------
multihomPrint:=(V,R,n,leafs)->(
    eq=V#0;
    neq=V#1;
    prev=V#2;
    for j to n-1 do(neq=for i to #neq-1 list 
        delete(promote(-z_(2*j+1)+1,R),neq#i));
    for i to #(eq)-1 do if eq#i!=1 then if (
        leafs==false or member(i,prev)==false) 
        then print(prev#i,i,toString(eq#i),toString(neq#i))
    )  
----------------------------------------------------
--Example 4 above-----------------------------------
R=multihomRing(3,QQ);
V=multihomVariety(3,R,
        ideal(y_2*(y_6^2*y_4+y_6*y_5*y_3+y_5^2*y_3),
              y_6*(y_6^2*y_4+y_6*y_5*y_3+y_5^2*y_3)));
multihomPrint(V,R,3,true)
\end{verbatim}

\begin{verbatim}
(0, 2, 6, ideal(z_1,z_2-1,
                z_3,y_4-1,
                y_5-1,y_6), 
           {})
(0, 1, 3, 8, ideal(z_1-1,z_2,
                   z_3,y_4-1,
                   y_5-1,y_6), 
              {})
(0, 1, 4, 10, ideal(z_1-1,
                    z_3,y_4-1,
                    y_5-1,y_6), 
              {z_2})
(0, 2, 5, 11, ideal(z_1,z_2-1,
                    z_3-1,z_4,
                    y_5^2-y_5,y_6+2*y_5-1), 
               {})
(0, 2, 5, 12, ideal(z_1,z_2-1,
                    z_3-1,
                    y_5-1,z_4*y_6^2+y_6+1), 
               {z_4})
(0, 1, 3 ,7, 14, ideal(z_1-1,z_2,
                       z_3-1,
                       y_5-1,z_4*y_6^3+y_6^2+y_6), 
                 {z_4})
(0, 1, 4 ,9, 15, ideal(z_1-1,
                       z_3-1,z_4,
                       y_5^2-y_5,y_6+2*y_5-1), 
                 {z_2})
(0, 1, 4, 9, 16, ideal(z_1-1,
                       z_3-1,
                       y_5-1,z_4*y_6^2+y_6+1), 
                 {z_2, z_4})
(0, 1, 3, 7, 13, 17, ideal(z_1-1,z_2,
                           z_3-1,z_4,
                           z_5-1,y_6^2+y_6), 
                  {})
(0, 1, 3, 7 ,13, 18, ideal(z_1-1,z_2,
                           z_3-1,z_4,
                           z_5,y_6-1), 
                  {})
\end{verbatim}

So from node 6,
$(\overline{y}_1:\overline{y}_2)=(0:1)$,
$(\overline{y}_3:\overline{y}_4)=(0:1)$,
$(\overline{y}_5:\overline{y}_6)=(1:0)$.

From node 8,
$(\overline{y}_2:\overline{y}_1)=(0:1)$,
$(\overline{y}_4:\overline{y}_3)=(1:0)$,
$(\overline{y}_6:\overline{y}_5)=(0:1)$.

From node 10,
$(\overline{y}_2:\overline{y}_1)=(\overline{y}_2:1)\ :\ \overline{y}_2\neq 0$,
$(\overline{y}_4:\overline{y}_3)=(1:0)$,
$(\overline{y}_6:\overline{y}_5)=(0:1)$.

From node 11,
$(\overline{y}_2:\overline{y}_1)=(1:0)$,
$(\overline{y}_4:\overline{y}_3)=(0:1)$,
$(\overline{y}_6:\overline{y}_5)\ :\
\overline{y}_5(\overline{y}_5-1)=0=\overline{y}_6+2\overline{y}_5-1$.

From node 12,
$(\overline{y}_2:\overline{y}_1)=(1:0)$,
$(\overline{y}_4:\overline{y}_3)=(\overline{y}_4:1)\ :\
\overline{y}_4\neq 0$,
$(\overline{y}_6:\overline{y}_5)=(\overline{y}_6:1)\ :\
\overline{y}_4\overline{y}_6^2+\overline{y}_6+1)=0$.

From node 14,
$(\overline{y}_2:\overline{y}_1)=(0:1)$,
$(\overline{y}_4:\overline{y}_3)=(\overline{y}_4:1)\ :\
\overline{y}_4\neq 0$,
$(\overline{y}_6:\overline{y}_5)=(\overline{y}_6:1)\ :\
\overline{y}_4\overline{y}_6^3+\overline{y}_6^2+\overline{y}_6)=0$.

From node 15,
$(\overline{y}_2:\overline{y}_1)=(\overline{y}_2:1)\ :\
\overline{y}_2\neq 0$,
$(\overline{y}_4:\overline{y}_3)=(0:1)$,
$(\overline{y}_6:\overline{y}_5)\ :\
\overline{y}_5(\overline{y}_5-1)=0=\overline{y}_6+2\overline{y}_5-1$.

From node 16,
$(\overline{y}_2:\overline{y}_1)=(\overline{y}_2:1)\ :\
\overline{y}_2\neq 0$,
$(\overline{y}_4:\overline{y}_3)=(\overline{y}_4:1)\ :\
\overline{y}_2,\overline{y}_4\neq 0$,
$(\overline{y}_6:\overline{y}_5)=(\overline{y}_6:1)\ :\
\overline{y}_4\overline{y}_6^2+\overline{y}_6+1=0$.

From node 17,
$(\overline{y}_2:\overline{y}_1)=(0:1)$,
$(\overline{y}_4:\overline{y}_3)=(0:1)$,
$(\overline{y}_6:\overline{y}_5)=(\overline{y}_6:1)\ :\
\overline{y}_6^2+\overline{y}_6=0$.

From node 18,
$(\overline{y}_2:\overline{y}_1)=(0:1)$,
$(\overline{y}_4:\overline{y}_3)=(0:1)$,
$(\overline{y}_6:\overline{y}_5)=(1:0)$.

\newpage

\end{document}